\documentclass[11pt, reqno]{amsart}
\usepackage[utf8]{inputenc}
\usepackage{indentfirst, amssymb, amsmath, amsthm, mathrsfs, setspace, indentfirst, enumerate,  mathrsfs, amsmath, amsthm}
\usepackage[colorlinks=true,linkcolor=purple, citecolor=blue,urlcolor=magenta]{hyperref}
\usepackage{graphicx}      
\usepackage{float}         
\usepackage{xcolor}        
\usepackage{colortbl}     
\usepackage{multirow}     
\usepackage{tikz}          
\definecolor{sectionlink}{RGB}{0,100,200} 
\newtheorem{theo}{Theorem}[section]
\newtheorem{lem}{Lemma}[section]

\newtheorem{exm}{Example}[section]
\newtheorem{defi}{Definition}[section]

\newtheorem*{theoA}{Theorem A}

\numberwithin{equation}{section}
\newcommand{\beas}{\begin{eqnarray*}}
\newcommand{\eeas}{\end{eqnarray*}}
\newcommand{\bea}{\begin{eqnarray}}
\newcommand{\eea}{\end{eqnarray}}

\begin{document}

\title[On the Sharp Bound of the Hankel Determinant for $\beta-$ Spirallike...]{On Hankel Determinants of $\beta$-Spirallike Mappings in Complex Banach Spaces}
\author[ N. Sarkar and P. Das]{Nabadwip Sarkar and Pradip Das}
\address{Amity School of Applied Sciences, Amity University Mumbai, Panvel, Navi Mumbai, Maharashtra-410206, India}
\email{nsarkar@mum.amity.edu, nabadwipsarkar52@gmail.com}
\address{Department of Mathematics, Raiganj University, Raiganj, West Bengal-733134, India.}
\email{pradipsmath@gmail.com}

\makeatletter
\@namedef{subjclassname@2020}{\textup{2020} Mathematics Subject Classification}
\makeatother

\subjclass[2020]{Primary 32H02; Secondary 30C45.}
\keywords{Holomorphic function, $\beta-$spirallike function, Hankel determinant}

\begin{abstract} This paper is devoted to the study of the second Hankel determinant for normalized $\beta$-spirallike mappings on the unit ball of a complex Banach space. Using the relationship between Banach-space holomorphic mappings and corresponding one-variable $\beta$-spirallike functions, we first establish sharp estimates for the initial homogeneous expansion coefficients. These coefficient inequalities are subsequently applied to derive a sharp bound for the second Hankel determinant. 
\end{abstract}

\maketitle

\section{Introduction and Preliminaries}

Let $\mathcal{A}$ denote the family of analytic functions in the open unit disk $\mathbb{U}=\{z\in\mathbb{C}:|z|<1\},$
normalized by
\begin{equation}\label{eq:A}
f(z)=z+\sum_{m=2}^{\infty}a_mz^m.
\end{equation}
Furthermore, let $\mathcal{S}\subset\mathcal{A}$ be the class of normalized univalent functions, and let $\mathcal{K}$ denote the subclass of $\mathcal{S}$ consisting of convex functions.

Throughout this paper, we shall frequently use the classical Carath\'eodory class, denoted by $\mathcal{P}$, which consists of analytic functions $p$ satisfying
\[
p(0)=1
\quad\text{and}\quad
\operatorname{Re}p(z)>0,\qquad z\in\mathbb{U}.
\]
Every function $p\in\mathcal{P}$ admits the Taylor expansion
\begin{equation}\label{eq:P}
p(z)=1+\sum_{m=1}^{\infty}p_mz^m
=1+p_1z+p_2z^2+p_3z^3+\cdots,
\qquad z\in\mathbb{U}.
\end{equation}

Let $X$ be a complex Banach space endowed with the norm $\|\cdot\|$, and let
\[
\mathbb{B}=\{x\in X:\|x\|<1\}
\]
be its open unit ball. We denote by $\mathcal{L}(X,Y)$ the Banach space of all bounded linear operators from $X$ into another complex Banach space $Y$. The identity operator on $X$ is denoted by $I$.

For each nonzero vector $x\in X$, define
\[
T(x)=
\left\{
T_x\in\mathcal{L}(X,\mathbb{C}):
T_x(x)=\|x\|,
\ \|T_x\|=1
\right\}.
\]
The Hahn--Banach theorem guarantees that $T(x)$ is nonempty. Moreover, for every $\xi\in\mathbb{C}\setminus\{0\}$,
\[
T_{\xi x}(\cdot)=\frac{|\xi|}{\xi}\,T_x(\cdot),
\]
which establishes a one-to-one correspondence between $T(\xi x)$ and $T(x)$.

Let $\mathcal{H}(\mathbb{B})$ denote the family of all holomorphic mappings from $\mathbb{B}$ into $X$. If $F\in\mathcal{H}(\mathbb{B})$, then for every $x\in \mathbb{B}$, the Fr\'echet--Taylor expansion is given by
\[
F(y)
=
\sum_{n=0}^{\infty}
\frac{1}{n!}
D^nF(x)\bigl((y-x)^n\bigr),
\]
for all $y$ sufficiently close to $x$, where
\[
D^nF(x)\bigl((y-x)^n\bigr)
=
D^nF(x)(y-x,\ldots,y-x),
\]
and $D^nF(x)$ denotes the $n$th Fr\'echet derivative of $F$ at $x$. Each $D^nF(x)$ is a bounded symmetric $n$-linear operator from $X^n$ into $X$.

A mapping $F\in\mathcal{H}(\mathbb{B})$ is called \emph{biholomorphic} if $F(\mathbb{B})$ is a domain in $X$ and the inverse mapping
\[
F^{-1}:F(\mathbb{B})\rightarrow \mathbb{B}
\]
exists and is holomorphic. Moreover, $F$ is said to be \emph{locally biholomorphic} whenever the Fr\'echet derivative $DF(x)$ is invertible with bounded inverse for every $x\in \mathbb{B}$.

A holomorphic mapping $F:B\rightarrow X$ is said to be \emph{normalized} if
\[
F(0)=0
\quad\text{and}\quad
DF(0)=I.
\]
Several important subclasses of normalized biholomorphic mappings in Banach spaces can be found in the monograph~\cite{GK2003}.

For later use, let $x_0\in X$ satisfy $\|x_0\|=1$ and choose $T_{x_0}\in T(x_0)$. We introduce the quantities $A_1=1$
and, for $n=2,3,4,5$,
\begin{equation}\label{eq:An}
A_n=\frac{1}{n!}T_{x_0}\left(D^nF(0)(x_0,\ldots,x_0)
\right),
\end{equation}
where the vector $x_0$ appears exactly $n$ times in the multilinear form $D^nF(0)$.

We shall also require the following definitions.

\begin{defi}\cite{GK2003}
Let $|\beta|<\pi/2$. A normalized locally biholomorphic mapping $F:\mathbb{B}\rightarrow X$ is said to be \emph{$\beta$-spirallike} if
\begin{equation}\label{eq:spiral-banach}
\operatorname{Re}\left(
e^{-i\beta}
T_x\bigl((DF(x))^{-1}F(x)\bigr)
\right)>0,
\qquad
x\in B\setminus\{0\},
\quad
T_x\in T(x).
\end{equation}
The family of all $\beta$-spirallike mappings on $\mathbb{B}$ will be denoted by $\widehat{\mathcal{S}}_{\beta}(\mathbb{B}).$

\medskip
In particular, when $X=\mathbb{C}$ and $\mathbb{B}=\mathbb{U}$, we simply write $\widehat{\mathcal{S}}_{\beta}=\widehat{\mathcal{S}}_{\beta}(\mathbb{U}).$
In this case, condition \eqref{eq:spiral-banach} reduces to
\[
\operatorname{Re}\left(
e^{i\beta}
\frac{zf'(z)}{f(z)}
\right)>0,
\qquad z\in\mathbb{U},
\]
which coincides with the classical definition of $\beta$-spirallike functions in the unit disk.
\end{defi}

\begin{defi}\cite{Chirila2014, XL2009}
Assume that $|\beta|<\pi/2$, and let $g:\mathbb{U}\rightarrow\mathbb{C}$
be a biholomorphic function satisfying $g(0)=1, \operatorname{Re}g(z)>0, z\in\mathbb{U}.$
Define
\[
g_{\beta}(z)
=\frac{\cos\beta\,g(z)+i\sin\beta}{e^{i\beta}},\qquad z\in\mathbb{U}.
\]

The class $\mathcal{M}_{g}(\beta)$ consists of all holomorphic mappings
$H:B\rightarrow X$ satisfying
\begin{equation}\label{eq:Mg}
\frac{\|x\|}{T_x(H(x))}
\in
g_{\beta}(\mathbb{U}),
\qquad
x\in B\setminus\{0\},
\quad
T_x\in T(x).
\end{equation}

If $X=\mathbb{C}$ and $B=\mathbb{U}$, then \eqref{eq:Mg} is equivalent to
\[
\frac{z}{H(z)}
\in
g_{\beta}(\mathbb{U}),
\qquad z\in\mathbb{U}.
\]
When $\beta=0$, the class $\mathcal{M}_{g}(0)$ reduces to the class introduced in
\cite{GHK2002,Kohr1998} and subsequently investigated in \cite{GHKK2017}-\cite{HKK2021}.
Furthermore, choosing
\[
g(z)=\frac{1+z}{1-z},
\qquad z\in\mathbb{U},
\]
one observes that $(DF(x))^{-1}F(x)\in\mathcal{M}_{g}(\beta)$ if and only if $F\in\widehat{\mathcal{S}}_{\beta}(\mathbb{B}).$
\end{defi} 
The Hankel determinant \(H_{q,n}(f)\) of a function \(f\in\mathcal{A}\) given by \eqref{eq:A} is defined by
\[
H_{q,n}(f)
=
\begin{vmatrix}
a_n & a_{n+1} & \cdots & a_{n+q-1}\\
a_{n+1} & a_{n+2} & \cdots & a_{n+q}\\
\vdots & \vdots & \ddots & \vdots\\
a_{n+q-1} & a_{n+q} & \cdots & a_{n+2(q-1)}
\end{vmatrix},
\]
where \(a_1=1\) and \(n,q\in\mathbb{N}\).

In particular, the second-order Hankel determinants are given by
\[
H_{2,2}(f)
=
\begin{vmatrix}
a_2 & a_3\\
a_3 & a_4
\end{vmatrix}
=
a_2a_4-a_3^2,
\]

and

\[
H_{2,1}(f)
=
\begin{vmatrix}
a_1 & a_2\\
a_2 & a_3
\end{vmatrix}
=
a_3-a_2^2.
\]
In recent years, the problem of determining sharp upper bounds for the Hankel determinant
\(|H_{q,n}(f)|\) has attracted considerable attention in geometric function theory. In particular,
the functional $H_{2,1}(f)=a_3-a_2^2,$ coincides with the classical Fekete--Szeg\"o functional introduced by Fekete and Szeg\"o
\cite{FS1933}. For the class $\mathcal{S}$, this functional was first estimated by Bieberbach
(see \cite[Vol.~I, p.~35]{Goodman1983}). Subsequently, Pommerenke \cite{Pommerenke1966} established a fundamental
result for the class $\mathcal{S}$, which stimulated extensive research on analogous coefficient
problems for various subclasses of univalent functions. More recently, considerable effort has
been devoted to obtaining sharp estimates for the second Hankel determinant $H_{2,2}(f)=a_2a_4-a_3^2,$
and several significant results have been reported in the literature (see, for example,
\cite{CKKLS2018,CKKLS2017,XDL2026,XHX2025}).\\

\medskip
In 2015, Krishna and Reddy \cite{KR2015} established the following sharp estimate for the second Hankel determinant of spirallike functions.

\begin{theoA}\cite[Theorem 3.1]{KR2015}
If $f(z)=z+\sum_{n=2}^{\infty}a_nz^n
\in\widehat{\mathcal{S}}_{\beta},
\;|\beta|\le \frac{\pi}{3},$
then
\[
|a_2a_4-a_3^2|\le \cos^2\beta,
\]
and the estimate is sharp.
\end{theoA}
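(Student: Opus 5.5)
Write $e^{i\beta}\,zf'(z)/f(z)=\cos\beta\,p(z)+i\sin\beta$ with $p\in\mathcal{P}$, $p(z)=1+p_1z+p_2z^2+p_3z^3+\cdots$. This is possible because the left side equals $e^{i\beta}$ at $z=0$ and has positive real part on $\mathbb{U}$. Setting $\tau=e^{-i\beta}\cos\beta$, the relation becomes
\[
zf'(z)=f(z)\bigl(1+\tau(p(z)-1)\bigr).
\]
Comparing coefficients of $z^2,z^3,z^4$ gives
\[
a_2=\tau p_1,\qquad 2a_3=\tau p_2+\tau^2p_1^2,
\]
\[
3a_4=\tau p_3+\tfrac{3}{2}\tau^2p_1p_2+\tfrac12\tau^3p_1^3 .
\]
Substituting these into $a_2a_4-a_3^2$ yields
\[
a_2a_4-a_3^2=\frac{\tau^2}{12}\Bigl(4p_1p_3-3p_2^2-\tau^2p_1^4\Bigr)
\]
(I will recheck this arithmetic carefully). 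Since $|\tau^2|=\cos^2\beta$, the claim reduces to proving
\[
\bigl|4p_1p_3-3p_2^2-\tau^2p_1^4\bigr|\le 12
\qquad\text{for } |\beta|\le\pi/3 .
\]

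To estimate this expression I will use the Libera--Złotkiewicz representation. By rotation invariance of the functional (replace $p(z)$ by $p(e^{i\theta}z)$, which multiplies the expression by $e^{4i\theta}$), I may assume $p_1=c\in[0,2]$. Then
\[
2p_2=c^2+(4-c^2)x,
\]
\[
4p_3=c^3+2(4-c^2)cx-(4-c^2)cx^2+2(4-c^2)(1-|x|^2)z,
\]
with $|x|\le1$ and $|z|\le1$. Substituting gives
\[
4p_1p_3-3p_2^2=\tfrac14 c^4+\tfrac12(4-c^2)c^2x-\tfrac14(4-c^2)(c^2+12)x^2+2c(4-c^2)(1-|x|^2)z .
\]
Subtracting $\tau^2c^4$, I bound by the triangle inequality using $|z|\le1$ and $|x|=\rho$. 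Writing $|\tfrac14-\tau^2|=:\kappa(\beta)$, this gives
\[
\Phi(c,\rho)=\kappa c^4+\tfrac12(4-c^2)c^2\rho+\tfrac14(4-c^2)(c^2+12)\rho^2+2c(4-c^2)(1-\rho^2).
\]
Here
\[
\kappa^2=\Bigl|\tfrac14-e^{-2i\beta}\cos^2\beta\Bigr|^2=\cos^4\beta-\tfrac12\cos^2\beta\cos2\beta+\tfrac1{16}.
\]

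The main obstacle is maximizing $\Phi$ over $[0,2]\times[0,1]$ and showing the maximum is $12$, attained at $c=0$, $\rho=1$. That point corresponds to $p(z)=(1+z^2)/(1-z^2)$, so $p_2=2$, $p_1=p_3=0$, and the extremal function satisfies $f(z)=z\,(1-z^2)^{-\tau}$. It gives $|a_3|=\cos\beta$ and $a_2=a_4=0$, hence equality, which settles sharpness.

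I expect $\partial_\rho\Phi\ge0$ to hold: the coefficient of $\rho^2$ is $\tfrac14(4-c^2)(c^2+12-8c)$, which is nonnegative on $[0,2]$. So the maximum over $\rho$ is at $\rho=1$, giving
\[
\Phi(c,1)=\kappa c^4+\tfrac14(4-c^2)(3c^2+12)=12+\bigl(\kappa-\tfrac34\bigr)c^4 .
\]
Thus the bound holds exactly when $\kappa\le\tfrac34$. Using $\cos2\beta=2\cos^2\beta-1$ and $t=\cos^2\beta$, this condition reads
\[
t^2-\tfrac12t(2t-1)+\tfrac1{16}\le\tfrac9{16},
\]
i.e.\ $\tfrac12 t\le\tfrac12$. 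This holds for all $t\le1$, which looks too generous for the stated range $|\beta|\le\pi/3$. So I will re-verify the coefficient formula and the $\rho$-monotonicity, in particular the sign of the mixed term, before concluding. If monotonicity in $\rho$ fails for some $c$, I will handle an interior critical point $\rho^*$ by direct comparison, and that is where the restriction $|\beta|\le\pi/3$ (i.e.\ $\cos^2\beta\ge\tfrac14$) would enter.
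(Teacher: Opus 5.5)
Your argument is correct and already complete. There is nothing left to re-verify and no interior critical point to handle. In $\Phi(c,\rho)$ the coefficients of $\rho$ and $\rho^2$ are $\frac12(4-c^2)c^2\ge0$ and $\frac14(4-c^2)(c-2)(c-6)\ge0$ on $[0,2]$. Hence $\Phi(c,\rho)\le\Phi(c,1)=12+(\kappa-\frac34)c^4\le 12$, because $\kappa^2=\frac12\cos^2\beta+\frac1{16}\le\frac9{16}$ for every $|\beta|<\pi/2$. Your identity $a_2a_4-a_3^2=\frac{\tau^2}{12}(4p_1p_3-3p_2^2-\tau^2p_1^4)$ is also right: the $\tau^3p_1^2p_2$ terms cancel. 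So you have proved the bound on the whole range $|\beta|<\pi/2$, and that is not a symptom of an error.

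The paper itself only cites Theorem A. However, in the proof of Theorem~\ref{thm2.2} it performs exactly your reduction (your $a_2,a_3,a_4$ are \eqref{A2}--\eqref{A4}). It then defers to the Krishna--Reddy Libera--Z{\l}otkiewicz argument, which is what you carry out.

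The substantive difference is the $p_1^4$ term. The paper's displayed bound replaces $\tau^2p_1^4=\cos^2\beta\,e^{-2i\beta}p_1^4$ by $\cos^2\beta\,p_1^4$ inside the modulus, which the triangle inequality does not justify. For instance, with $p(z)=(1+z)/(1-z)$ and $\beta=\pi/3$, its right-hand side is $0$ while $|a_2a_4-a_3^2|=\sqrt3/12$. With that real coefficient one writes $|\frac14-\cos^2\beta|\,c^4=(\cos^2\beta-\frac14)c^4$, which is valid precisely when $|\beta|\le\pi/3$; this is presumably where the restriction in Theorem A comes from. Keeping $\tau^2$ complex and bounding $|\frac14-\tau^2|\le\frac34$, as you do, is both correct and more general. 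Your extremal function $z(1-z^2)^{-\tau}$ is the paper's example up to the rotation $z\mapsto iz$.
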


A natural question is whether the above sharp estimate can be extended from the classical one-dimensional setting to spirallike mappings defined on complex Banach spaces. Motivated by this problem, the main objective of the present paper is to establish an analogue of the above result in the framework of complex Banach spaces. In particular, we derive a sharp upper bound for the second Hankel determinant of a class of spirallike mappings in Banach spaces, thereby extending the theorem of Krishna and Reddy to the infinite-dimensional setting.
 
\section{{\bf Lemmas}}
Before proving the sharpness of the main theorem, we establish the following auxiliary lemmas. These results will be used repeatedly in the subsequent analysis and form the basis for the proof of the sharpness of the obtained estimate.

\begin{lem}\label{L1}
Let $u\in X$ with $\|u\|=1$ and let $T_u\in T(u)$. Define $F(x)=\frac{f(T_u(x))}{T_u(x)}x, x\in \mathbb{B},$ where $f\in\mathcal{S}$. If $f$ is a normalized $\beta$-spirallike function on $\mathbb{U}$, then $F$ is a $\beta$-spirallike mapping on $\mathbb{B}$.
\end{lem}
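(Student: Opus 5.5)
We plan to verify the definition of $\beta$-spirallike mappings directly by computing $(DF(x))^{-1}F(x)$ in closed form. Write $F(x)=h(T_u(x))\,x$ with $h(\zeta)=f(\zeta)/\zeta$ for $\zeta\neq 0$ and $h(0)=1$. Then $h$ is holomorphic on $\mathbb{U}$ and nonvanishing there, since $f$ is univalent with $f(0)=0$. Because $|T_u(x)|\le\|x\|<1$, $F$ is holomorphic on $\mathbb{B}$, with $F(0)=0$ and $DF(0)=I$. Differentiating gives
\[
DF(x)y=h(T_u(x))\,y+h'(T_u(x))\,T_u(y)\,x .
\]

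We now invert $DF(x)$ by a rank-one perturbation argument. Set $\zeta=T_u(x)$. To solve $DF(x)y=w$, apply $T_u$ to obtain
\[
T_u(y)\bigl(h(\zeta)+\zeta h'(\zeta)\bigr)=T_u(w).
\]
Here $h(\zeta)+\zeta h'(\zeta)=f'(\zeta)\neq 0$. Hence
\[
y=\frac{1}{h(\zeta)}\Bigl(w-\frac{h'(\zeta)T_u(w)}{f'(\zeta)}\,x\Bigr),
\]
which is a bounded inverse, so $F$ is locally biholomorphic. Taking $w=F(x)=h(\zeta)x$ gives
\[
(DF(x))^{-1}F(x)=\Bigl(1-\frac{\zeta h'(\zeta)}{f'(\zeta)}\Bigr)x=\frac{h(\zeta)}{f'(\zeta)}\,x=\frac{f(\zeta)}{\zeta f'(\zeta)}\,x .
\]
For $\zeta=0$ this factor is understood as its limit $1$.

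It remains to check the sign condition. For $x\neq0$ and $T_x\in T(x)$ we have $T_x\bigl((DF(x))^{-1}F(x)\bigr)=\|x\|\,p(\zeta)$, where
\[
p(\zeta)=\frac{f(\zeta)}{\zeta f'(\zeta)},\qquad \zeta=T_u(x)\in\mathbb{U}.
\]
Since $f$ is $\beta$-spirallike, $\operatorname{Re}\bigl(e^{-i\beta}\zeta f'(\zeta)/f(\zeta)\bigr)>0$ on $\mathbb{U}$, with the value at $\zeta=0$ equal to $e^{-i\beta}$, whose real part $\cos\beta$ is positive. We must use the sign convention matching the paper's Banach-space definition \eqref{eq:spiral-banach}; note that the one-variable reduction stated there, with $e^{i\beta}$, is exactly what arises from the reciprocal.

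The key observation is $\operatorname{Re}(e^{-i\beta}/q)=\operatorname{Re}(e^{i\beta}\bar q)/|q|^2$ with the same sign as $\operatorname{Re}(e^{i\beta}q)$. So positivity of $\operatorname{Re}(e^{i\beta}\zeta f'/f)$ is equivalent to positivity of $\operatorname{Re}(e^{-i\beta}f/(\zeta f'))$. Multiplying by $\|x\|>0$ gives \eqref{eq:spiral-banach}.

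The main obstacle is only bookkeeping: handling $\zeta=T_u(x)=0$, including points $x\ne0$ in $\ker T_u$, where the factor is $1$ and $\operatorname{Re}e^{-i\beta}=\cos\beta>0$. We also need to keep the $e^{\pm i\beta}$ convention consistent.
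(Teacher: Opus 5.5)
Your proposal is correct and follows essentially the same route as the paper. You use the rank-one structure of $DF(x)$ to get $(DF(x))^{-1}F(x)=\frac{f(\zeta)}{\zeta f'(\zeta)}\,x$ with $\zeta=T_u(x)$, apply $T_x$, and pass from $\operatorname{Re}\bigl(e^{i\beta}\zeta f'/f\bigr)>0$ to $\operatorname{Re}\bigl(e^{-i\beta}f/(\zeta f')\bigr)>0$. You are also more careful than the paper about the bounded invertibility of $DF(x)$ and about the points $x\neq 0$ with $T_u(x)=0$.

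Only the sign bookkeeping needs tidying. In the paper's convention the hypothesis reads $\operatorname{Re}\bigl(e^{i\beta}\zeta f'/f\bigr)>0$, not with $e^{-i\beta}$. Also, $\operatorname{Re}(e^{-i\beta}/q)=\operatorname{Re}(e^{-i\beta}\bar q)/|q|^2=\operatorname{Re}(e^{i\beta}q)/|q|^2$, not $\operatorname{Re}(e^{i\beta}\bar q)/|q|^2$.
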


\begin{proof}
Since $f$ is a normalized $\beta$-spirallike function on $\mathbb{U}$, we have
\begin{equation}
\operatorname{Re}\left(e^{i\beta}\frac{\xi f'(\xi)}{f(\xi)}\right)>0,
\qquad \xi\in\mathbb{U}.
\label{eq:spiral}
\end{equation}

For convenience, let $h(x)=\frac{f(T_u(x))}{T_u(x)},\; x\in\mathbb{B}.$ It follows immediately from the definition of $F$ that $F(x)=h(x)x.$

By the product rule,
\[
DF(x)y=h(x)y+Dh(x)y\,x,\qquad y\in X.
\]
Moreover,
\[
h(x)+Dh(x)x=f'(T_u(x)).
\]
Indeed,
\[
Dh(x)x
=
\frac{T_u(x)f'(T_u(x))-f(T_u(x))}
{T_u(x)},
\]
which immediately yields the above identity.

Next,
\[
Dh(x)F(x)
=
h(x)Dh(x)x,
\]
and therefore,
\[
(DF(x))^{-1}F(x)
=
\frac1{h(x)}
\left(
F(x)-\frac{Dh(x)F(x)}
{h(x)+Dh(x)x}x
\right).
\]
Substituting the above relations, we obtain
\[
\begin{aligned}
(DF(x))^{-1}F(x)
=
x-\frac{Dh(x)x}
{h(x)+Dh(x)x}x=
\frac{h(x)}
{h(x)+Dh(x)x}x=
\frac{f(T_u(x))}
{T_u(x)f'(T_u(x))}x.
\end{aligned}
\]
Hence,
\begin{equation}
T_x\bigl((DF(x))^{-1}F(x)\bigr)
=
\frac{f(T_u(x))}
{T_u(x)f'(T_u(x))}
\|x\|.
\label{eq:Tx}
\end{equation}

Since $\operatorname{Re}\left(e^{i\beta}
\frac{\xi f'(\xi)}{f(\xi)}\right)>0,$
it follows that $
\operatorname{Re}\left(
e^{-i\beta}
\frac{f(\xi)}
{\xi f'(\xi)}
\right)>0,
\; \xi\in\mathbb{U},$
because $\operatorname{Re}\left(\frac1z\right)
=
\frac{\operatorname{Re}(z)}{|z|^2}>0.$
whenever $\operatorname{Re}(z)>0$.

Combining \eqref{eq:Tx} with \eqref{eq:spiral}, we deduce that
\[
\operatorname{Re}\left(
e^{-i\beta}
T_x\bigl((DF(x))^{-1}F(x)\bigr)
\right)
=
\operatorname{Re}\left(
e^{-i\beta}
\frac{f(T_u(x))}
{T_u(x)f'(T_u(x))}
\right)\|x\|
>0,
\qquad x\in \mathbb{B}\setminus\{0\}.
\]
Hence, by the definition of $\beta$-spirallike mappings on $\mathbb{B}$, $F$ is a $\beta$-spirallike mapping on $\mathbb{B}$.
\end{proof}

\begin{lem}\label{L2}
Suppose that $g\in H(\mathbb{B},\mathbb{C})$ with $g(0)=1$, and define $F(x)=g(x)x, x\in\mathbb{B}.$ Fix $x_{0}\in\partial\mathbb{B}$ and let
$f(\xi)=g(\xi x_{0})\,\xi,\xi\in\mathbb{U}.$
Then, for $|\beta|<\pi/2$,
\[
f\in\widehat{\mathcal{S}}_{\beta}
\quad\Longleftrightarrow\quad
F\in\widehat{\mathcal{S}}_{\beta}(\mathbb{B}).
\]
\end{lem}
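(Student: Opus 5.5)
The plan is to compute $(DF(x))^{-1}F(x)$ explicitly for $F(x)=g(x)x$, exactly as in the proof of Lemma~\ref{L1}, and then reduce the spirallikeness condition at a point $x\neq 0$ to the one-variable condition for $f$ at a suitable point of $\mathbb{U}$. By the product rule, $DF(x)y=g(x)y+Dg(x)y\,x$. Solving $DF(x)w=F(x)$ with the ansatz $w=cx$ gives $c\,(g(x)+Dg(x)x)=g(x)$. Hence, wherever $g(x)+Dg(x)x\neq 0$,
\[
(DF(x))^{-1}F(x)=\frac{g(x)}{g(x)+Dg(x)x}\,x,\qquad
T_x\bigl((DF(x))^{-1}F(x)\bigr)=\frac{g(x)}{g(x)+Dg(x)x}\,\|x\|.
\]
On the other side, $f(\xi)=\xi g(\xi x_0)$ gives $f'(\xi)=g(\xi x_0)+Dg(\xi x_0)(\xi x_0)$, so that
\[
\frac{f(\xi)}{\xi f'(\xi)}=\frac{g(\xi x_0)}{g(\xi x_0)+Dg(\xi x_0)(\xi x_0)}.
\]
Thus at $x=\xi x_0$ the Banach quantity is exactly $\|x\|\,f(\xi)/(\xi f'(\xi))$. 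Using $\operatorname{Re}(1/z)=\operatorname{Re}z/|z|^2$, the condition $\operatorname{Re}\bigl(e^{-i\beta}f/(\xi f')\bigr)>0$ is equivalent to $\operatorname{Re}\bigl(e^{i\beta}\xi f'/f\bigr)>0$.

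The direction ($\Leftarrow$) is then immediate. If $F\in\widehat{\mathcal{S}}_\beta(\mathbb{B})$, restrict to $x=\xi x_0$ with $0<|\xi|<1$. Local biholomorphy of $F$ forces $g(x)+Dg(x)x\neq0$, since otherwise $DF(x)x=0$. Normalization of $f$ comes from $g(0)=1$, and $f'(\xi)\ne0$ holds for the same reason. The condition at $\xi=0$ holds by continuity, since the quotient equals $1$ there. Note that $f(\xi)\neq0$ for $\xi\neq 0$ because $F(\xi x_0)=f(\xi)x_0$ and $F$ is injective near $0$; more simply, $T_x$ of the quotient has positive real part and is therefore finite and nonzero.

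The direction ($\Rightarrow$) is the main obstacle. The hypothesis on $f$ only controls $g$ along the single complex line $\mathbb{C}x_0$, while $F$ must satisfy the condition at every $x\in\mathbb{B}\setminus\{0\}$. My first attempt is to interpret the lemma in the setting where $g$ depends on $x$ only through the line geometry. For general $g$ the statement is false: for example, $g$ could be a function of a functional vanishing on $x_0$. I would therefore look for the intended reading. Most likely the authors intend $x_0$ to be arbitrary, so that the conclusion is ``$f_{x_0}\in\widehat{\mathcal{S}}_\beta$ for every $x_0\in\partial\mathbb{B}$ iff $F\in\widehat{\mathcal{S}}_\beta(\mathbb{B})$.''

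With that reading the argument runs as follows. Given $x\ne0$, set $x_0=x/\|x\|$ and $\xi=\|x\|$. The displayed identities give $T_x\bigl((DF(x))^{-1}F(x)\bigr)=\|x\|\,f_{x_0}(\xi)/(\xi f_{x_0}'(\xi))$, and hence the required positivity. It remains to prove local biholomorphy of $F$. Nonvanishing of $g(x)+Dg(x)x=f_{x_0}'(\xi)$ comes from $f_{x_0}$ being spirallike. Nonvanishing of $g(x)=f_{x_0}(\xi)/\xi$ follows from univalence of $f_{x_0}$. Given these, $DF(x)=g(x)I+(Dg(x)\cdot)\,x$ is a rank-one perturbation of a nonzero multiple of the identity. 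It is invertible with bounded inverse exactly when $g(x)+Dg(x)x\neq0$, by the explicit Sherman--Morrison formula already used in Lemma~\ref{L1}. I will state this reading explicitly and note that it is what Lemma~\ref{L1}'s use in the sharpness argument requires.
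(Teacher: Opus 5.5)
Your direction $F\in\widehat{\mathcal{S}}_\beta(\mathbb{B})\Rightarrow f\in\widehat{\mathcal{S}}_\beta$ is the paper's argument. You compute $(DF(x))^{-1}F(x)=\frac{g(x)}{g(x)+Dg(x)x}\,x$, restrict to $x=\xi x_0$, and identify the result with $|\xi|\,f(\xi)/(\xi f'(\xi))$. Your derivation via the ansatz $w=cx$ plus uniqueness is slightly cleaner than the paper's inverse formula, which divides by $g(x)$. Of your two justifications for $f(\xi)\neq 0$, only the second covers all $\xi\neq0$; injectivity of $F$ near $0$ gives it only for small $|\xi|$.

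Your objection to the other direction is correct. It is an error in the paper, not a gap in your proposal. With $x_0$ fixed, the implication $f\in\widehat{\mathcal{S}}_\beta\Rightarrow F\in\widehat{\mathcal{S}}_\beta(\mathbb{B})$ fails once $\dim X\ge 2$. Take $X=\mathbb{C}^2$ with the Euclidean norm, $x_0=(1,0)$ and $g(x)=1+x_2$. Then $f(\xi)=\xi\in\widehat{\mathcal{S}}_\beta$. But $DF(x)x=(1+2x_2)x$ vanishes at $x=(0,-\tfrac12)\in\mathbb{B}$, so $F$ is not even locally biholomorphic.

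The paper's proof of the converse breaks exactly where you expect. From $\xi f'(\xi)/f(\xi)\neq 0$ on $\mathbb{U}$ it concludes $g(x)+Dg(x)x\neq0$ for all $x\in\mathbb{B}$, and then positivity on all of $\mathbb{B}\setminus\{0\}$. Both conclusions only follow on $\mathbb{C}x_0\cap\mathbb{B}$. The paper also never checks $g(x)\neq 0$, which its inverse formula needs.

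Your repaired statement quantifies over all $x_0\in\partial\mathbb{B}$, taking $x_0=x/\|x\|$ and $\xi=\|x\|$. That is the right statement. Your proof of it is complete, including $g(x)=f_{x_0}(\xi)/\xi\neq0$ and the rank-one inversion criterion for $DF(x)$.

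One correction to your closing remark: nothing later in the paper needs the converse. Theorem~\ref{T1} uses Lemma~\ref{L2} only in the direction $F\Rightarrow f$ for a single fixed $x_0$, which is valid. The sharpness example rests on Lemma~\ref{L1}, which is proved independently of Lemma~\ref{L2}. Lemma~\ref{L1} also illustrates the other way a single direction can suffice. There $g(x)=f(T_u(x))/T_u(x)$ depends on $x$ only through $T_u(x)\in\mathbb{U}$, so one-variable information does control all of $\mathbb{B}$.
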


\begin{proof}
Assume first that $F\in\widehat{\mathcal{S}}_{\beta}(\mathbb{B}).$ Then $F$ is locally biholomorphic on $\mathbb{B}$. Consequently,
\[
g(x)\neq0,
\qquad
g(x)+Dg(x)x\neq0,
\qquad x\in\mathbb{B}.
\]

Since
\[
DF(x)y=g(x)y+Dg(x)y\,x,
\]
a direct computation gives
\begin{equation}
[DF(x)]^{-1}
=
\frac1{g(x)}
\left(
I-
\frac{xDg(x)}
{g(x)+Dg(x)x}
\right).
\label{eq:inverse}
\end{equation}

Hence,
\[
(DF(x))^{-1}F(x)
=
\frac{g(x)}
{g(x)+Dg(x)x}\,x,
\qquad x\in\mathbb{B}.
\]
Applying the supporting functional $T_x$ yields
\begin{equation}
T_x\!\left((DF(x))^{-1}F(x)\right)
=
\frac{\|x\|\,g(x)}
{g(x)+Dg(x)x}.
\label{eq:Tx}
\end{equation}

On the other hand,
\[
f(\xi)=g(\xi x_0)\xi,
\]
and therefore
\begin{equation}
f'(\xi)
=
g(\xi x_0)+Dg(\xi x_0)\,\xi x_0.
\label{eq:fprime}
\end{equation}
Combining \eqref{eq:Tx} and \eqref{eq:fprime}, we obtain
\[
e^{-i\beta}
\frac{f(\xi)}
{\xi f'(\xi)}
=
e^{-i\beta}
T_{\xi x_0}
\left(
(DF(\xi x_0))^{-1}
F(\xi x_0)
\right).
\]
Since $F\in\widehat{\mathcal{S}}_{\beta}(\mathbb{B}),$
it follows that
\[
\operatorname{Re}
\left(
e^{-i\beta}
\frac{f(\xi)}
{\xi f'(\xi)}
\right)>0\Leftrightarrow 
\operatorname{Re}
\left(
e^{i\beta}
\frac{\xi f'(\xi)}
{f(\xi)}
\right)>0,
\]
and hence $f\in\widehat{\mathcal{S}}_{\beta}.$\\

\medskip

Conversely, suppose that $f\in\widehat{\mathcal{S}}_{\beta}.$
Then $\operatorname{Re}
\left(
e^{i\beta}
\frac{\xi f'(\xi)}
{f(\xi)}
\right)>0,$
which implies that
\[
\frac{\xi f'(\xi)}{f(\xi)}\neq0,
\qquad \xi\in\mathbb{U}.
\]
Consequently,
\[
g(x)+Dg(x)x\neq0,
\qquad x\in\mathbb{B}.
\]
By \eqref{eq:inverse}, $DF(x)$ is invertible for every
$x\in\mathbb{B}$, and therefore $F$ is locally biholomorphic.

Finally, using \eqref{eq:Tx} together with
\[
\operatorname{Re}
\left(
e^{-i\beta}
\frac{f(\xi)}
{\xi f'(\xi)}
\right)>0,
\]
we obtain
\[
\operatorname{Re}
\left(
e^{-i\beta}
T_x
\left(
(DF(x))^{-1}F(x)
\right)
\right)>0,
\qquad x\in\mathbb{B}\setminus\{0\}.
\]
Hence, $F\in\widehat{\mathcal{S}}_{\beta}(\mathbb{B}).$

This completes the proof.
\end{proof}

\section{{\bf Main Results and Their Proofs}}

In this section, we investigate the second Hankel determinant for the class $\widehat{\mathcal{S}}_{\beta}(\mathbb{B})$ of spirallike mappings on complex Banach spaces. By employing the coefficient relations established in the preceding section together with sharp estimates for Carath\'eodory functions, we obtain a sharp upper bound for the second Hankel determinant. Finally, an extremal mapping is constructed to verify the sharpness of the obtained result.
\begin{theo}\label{T1}
Let $g\in H(\mathbb{B},\mathbb{C})$ satisfy $g(0)=1$, and define $F(x)=g(x)x, x\in\mathbb{B}.$
Suppose that $F\in\widehat{\mathcal{S}}_{\beta}(\mathbb{B})$, $|\beta|\leq \frac{\pi}{3}$. Then, for every
$x_{0}\in X$ with $\|x_{0}\|=1$,
\[
|H_{2,2}(F)|\leq \cos^2 \beta,
\]
where $H_{2,2}(F)=
\begin{vmatrix}
A_{2} & A_{3} \\
A_{3} & A_{4} \\
\end{vmatrix},$
or equivalently, $H_{2,2}(F)
=A_2A_4-A_3^2,$
with  $A_{2},A_{3},A_{4}$ defined by
\eqref{eq:An}. Moreover, the estimate is sharp.
\end{theo}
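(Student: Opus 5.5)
The plan is to reduce Theorem \ref{T1} to Theorem A by restricting $F$ to the complex line through $x_0$. Fix $x_0$ with $\|x_0\|=1$ and $T_{x_0}\in T(x_0)$, and set $f(\xi)=g(\xi x_0)\xi$ for $\xi\in\mathbb{U}$. By Lemma \ref{L2}, the hypothesis $F\in\widehat{\mathcal{S}}_{\beta}(\mathbb{B})$ gives $f\in\widehat{\mathcal{S}}_{\beta}$. Moreover $f(0)=0$ and $f'(0)=g(0)=1$, so $f$ is normalized.

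The key step is to identify the coefficients $A_n$ of \eqref{eq:An} with the Taylor coefficients $a_n$ of $f$.

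\begin{itemize}
\item Expand $g(x)=1+\sum_{k\ge1}\frac{1}{k!}D^kg(0)(x^k)$. Then
\[
F(x)=x+\sum_{n\ge2}\frac{1}{(n-1)!}D^{n-1}g(0)(x^{n-1})\,x .
\]
\item Comparing homogeneous parts gives
\[
\frac{1}{n!}D^nF(0)(x_0^n)=\frac{1}{(n-1)!}D^{n-1}g(0)(x_0^{n-1})\,x_0 .
\]
\item Applying $T_{x_0}$ and using $T_{x_0}(x_0)=1$ yields
\[
A_n=\frac{1}{(n-1)!}D^{n-1}g(0)(x_0^{n-1}).
\]
\item On the other hand, $f(\xi)=\xi+\sum_{n\ge2}\frac{1}{(n-1)!}D^{n-1}g(0)(x_0^{n-1})\,\xi^{n}$, whose $\xi^n$ coefficient is exactly this quantity. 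Hence $a_n=A_n$ for $n=2,3,4$.
\end{itemize}

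It follows that $H_{2,2}(F)=A_2A_4-A_3^2=a_2a_4-a_3^2$. Theorem A, valid for $|\beta|\le\pi/3$, then gives $|H_{2,2}(F)|\le\cos^2\beta$.

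Theorem A applies here even though $f$ is not assumed to lie in $\mathcal{S}$, because normalized spirallike functions are univalent. If one prefers not to rely on that, the one-variable bound can be rederived directly. Write
\[
e^{i\beta}\frac{zf'(z)}{f(z)}=\cos\beta\,p(z)+i\sin\beta, \qquad p\in\mathcal{P}.
\]
This expresses $a_2,a_3,a_4$ in terms of $p_1,p_2,p_3$ with factor $\tau=e^{-i\beta}\cos\beta$, namely
\[
a_2=\tau p_1,\qquad a_3=\tfrac12\tau(p_2+\tau p_1^2),
\]
and a similar formula for $a_4$. One then applies the Libera--Zlotkiewicz representation of $p_2,p_3$ and maximizes over $p_1\in[0,2]$. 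The restriction $|\beta|\le\pi/3$ is exactly what makes the maximum occur at $p_1=0$, giving $\cos^2\beta$. I expect this optimization to be the main obstacle if done from scratch; I would simply cite Theorem A.

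For sharpness, take the one-variable extremal function from Theorem A, with $p(z)=(1+z^2)/(1-z^2)$. Explicitly,
\[
f_0(\xi)=\xi(1-\xi^2)^{-\tau}, \qquad \tau=e^{-i\beta}\cos\beta .
\]
Its coefficients are $a_2=a_4=0$ and $a_3=\tau$, so $|a_3^2|=\cos^2\beta$. Choose $u=x_0$ and $T_u=T_{x_0}$, and set
\[
F_0(x)=\frac{f_0(T_u(x))}{T_u(x)}\,x .
\]
This has the form $g(x)x$ with $g(0)=1$. By Lemma \ref{L1} (or Lemma \ref{L2}), $F_0\in\widehat{\mathcal{S}}_{\beta}(\mathbb{B})$. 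Since its restriction to the line through $x_0$ is $f_0$, the coefficient identification gives $|H_{2,2}(F_0)|=\cos^2\beta$, so the bound is attained.
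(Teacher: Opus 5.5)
Your proposal is correct and follows essentially the same route as the paper. It restricts to the complex line through $x_0$ via $f(\xi)=g(\xi x_0)\xi$, uses the forward direction of Lemma~\ref{L2} to get $f\in\widehat{\mathcal{S}}_{\beta}$, identifies $A_n=a_n$ for $n=2,3,4$, applies Theorem~A, and gets sharpness by lifting a one-variable extremal function through Lemma~\ref{L1}; your $\xi(1-\xi^2)^{-\tau}$ in place of the paper's $\xi(1+\xi^2)^{-\tau}$ makes no difference.

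For the sharpness step, rely on Lemma~\ref{L1} and drop the parenthetical appeal to Lemma~\ref{L2}. Spirallikeness of the single slice $f$ only controls $F$ on $\mathbb{C}x_0$, so the converse direction of Lemma~\ref{L2} fails for one fixed $x_0$. For example, take $g(x)=1+2x_2$ on the Euclidean unit ball of $\mathbb{C}^2$ with $x_0=(1,0)$: then $f(\xi)=\xi$ is spirallike, but $g$ vanishes at $(0,-1/2)$, so $F$ is not locally biholomorphic.
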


\begin{proof}
Fix $x_{0}\in\partial\mathbb{B}$ and define $f(\xi)=g(\xi x_{0})\,\xi, \xi\in\mathbb{U}.$
Since $F\in\widehat{\mathcal{S}}_{\beta}(\mathbb{B})$, Lemma~\ref{L2}
implies that $f\in\widehat{\mathcal{S}}_{\beta}.$

On the other hand,
\[
f(\xi)
=
T_{x_{0}}\bigl(F(\xi x_{0})\bigr),
\]
and hence
\[
a_{2}
=
\frac{f''(0)}{2!}
=
\frac{1}{2!}
T_{x_{0}}
\left(
D^{2}F(0)(x_{0}^{2})
\right)
=
A_{2},
\]
\[
a_{3}
=
\frac{f^{(3)}(0)}{3!}
=
\frac{1}{3!}
T_{x_{0}}
\left(
D^{3}F(0)(x_{0}^{3})
\right)
=
A_{3},
\]
and \[
a_{4}
=
\frac{f^{(4)}(0)}{4!}
=
\frac{1}{4!}
T_{x_{0}}
\left(
D^{4}F(0)(x_{0}^{4})
\right)
=
A_{4}.
\]

Therefore,
\[
H_{2,2}(F)
=
H_{2,2}(f).
\]
Applying Theorem~A (see \cite[Theorem 3.1.]{KR2015}),
we obtain
\[
|H_{2,2}(F)|
=
|H_{3,1}(f)|
\leq
\cos^2 \beta.
\]
This completes the proof.
\end{proof}

The following example demonstrates that the estimate obtained in Theorem \ref{T1} is sharp.

\begin{exm}
Consider the mapping
\[
F(x)=
\frac{f\!\left(T_{x_0}(x)\right)}{T_{x_0}(x)}\,x,
\qquad
x_0\in\partial\mathbb{B},\;
T_{x_0}\in T(x_0),
\]
where
\[
f(z)=\frac{z}{\left(1+z^2\right)^{\cos\beta\,e^{-i\beta}}},
\qquad z\in\mathbb{U}.
\]

By Lemma \ref{L1}, we have $F\in\widehat{\mathcal{S}}_{\beta}(\mathbb{B}).$
Furthermore, a straightforward computation gives
\[
\frac{D^2F(0)(x^2)}{2!}=0,\;\;\;
\frac{D^3F(0)(x^3)}{3!}
=
-\cos\beta\,e^{-i\beta}
\bigl(T_{x_0}(x)\bigr)^2x,
\;\;\;\text{and}\;\;\;
\frac{D^4F(0)(x^4)}{4!}=0.
\]

Hence,
\[
A_2=0,\;\;
A_3=-\cos\beta\,e^{-i\beta},\;\;\text{and}\;\;
A_4=0.
\]
Therefore,
\[
|H_{2,2}(F)|
=
\left|A_2A_4-A_3^2\right|
=
\cos^2\beta.
\]

Thus, the upper bound obtained in Theorem \ref{T1} is attained, showing that the estimate is sharp.
\end{exm}

\medskip
Next, removing the restrictive assumption $F(x)=g(x)x$, we generalize Theorem~A to higher dimensions under weaker assumptions than those of Theorem \ref{T1}. Assume that
\begin{equation}\label{cc1}
\frac{D^{k+1}F(0)\left(x^{k+1}\right)}{(k+1)!}
= H_{F,k}(x)x,\qquad x\in X,\quad k=1,2,3.
\end{equation}
where $H_{F,k}(x)$ is a homogeneous polynomial of degree $k$ with values in $\mathbb{C}$. The fact that the assumption \eqref{cc1} is weaker than that of Theorem \ref{T1} is justified in \cite{EJ2024,H2023}.

\begin{theo}\label{thm2.2}
Let $F$ be a locally biholomorphic mapping on $\mathbb{B}$, and suppose that $F$ satisfies the assumption \eqref{cc1}. If $F\in \widehat{\mathcal{S}}_{\beta}(\mathbb{B})$, $|\beta|\leq \frac{\pi}{3}$, then for every $x_0\in X$ with $\|x_0\|=1$, we have
\[
|H_{2,2}(F)|\leq \cos^2 \beta,
\]
where $H_{2,2}(F)$ is the second Hankel determinant given by $H_{2,2}(F)=
\begin{vmatrix}
 A_2 & A_3\\
 A_3 & A_4\\
\end{vmatrix},$
or equivalently, $H_{2,2}(F)=A_2A_4-A_3^2,$ where $A_1=1$, and $A_2$, $A_3$, and $A_4$ are defined by
\eqref{eq:An}, respectively. Moreover, the above estimate is sharp.
\end{theo}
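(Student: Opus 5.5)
The plan is to reduce the statement to a coefficient problem for a single Carath\'eodory function in one complex variable. We cannot simply restrict $F$ to a complex line and invoke Theorem~A as in Theorem~\ref{T1}: without the form $F(x)=g(x)x$, the function $\xi\mapsto T_{x_0}(F(\xi x_0))$ need not be spirallike. Instead, we show that the first three Taylor coefficients of a suitable Carath\'eodory function are related to $A_2,A_3,A_4$ exactly as in the one-variable case, and then rerun the Krishna--Reddy estimate at the level of those Carath\'eodory coefficients.

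\emph{Step 1: expansion of $(DF(x))^{-1}F(x)$.} By \eqref{cc1},
\[
F(x)=x+\sum_{k=1}^{3}H_{F,k}(x)x+O(\|x\|^5).
\]
For a scalar $b$, the product rule and Euler's identity $DH_{F,k}(x)x=kH_{F,k}(x)$ give
\[
DF(x)(bx)=b\Bigl(1+\sum_{k=1}^{3}(k+1)H_{F,k}(x)\Bigr)x+O(\|x\|^5).
\]
So up to order four, $DF(x)$ acts on multiples of $x$ as multiplication by a scalar. Solving $DF(x)w=F(x)$ formally with the ansatz $w=\psi(x)x$ then yields
\[
(DF(x))^{-1}F(x)=\frac{1+\sum_{k}H_{F,k}(x)}{1+\sum_k(k+1)H_{F,k}(x)}\,x+O(\|x\|^5).
\]
Here the remainder is controlled by the Neumann series of $DF(x)^{-1}$ near $0$.

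\emph{Step 2: the one-variable Carath\'eodory function.} Fix $x_0$ with $\|x_0\|=1$ and $T_{x_0}\in T(x_0)$. Put $x=\xi x_0$ with $\xi\in\mathbb{U}\setminus\{0\}$, and use $T_{\xi x_0}=\frac{|\xi|}{\xi}T_{x_0}$. Define
\[
q(\xi)=\frac{\xi}{T_{x_0}\bigl((DF(\xi x_0))^{-1}F(\xi x_0)\bigr)}=\frac{\|x\|}{T_x\bigl((DF(x))^{-1}F(x)\bigr)},\qquad q(0)=1.
\]
The first expression shows $q$ is holomorphic; the second shows $\operatorname{Re}(e^{i\beta}q)>0$, since $\operatorname{Re}(1/z)>0$ whenever $\operatorname{Re} z>0$. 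Hence
\[
p(\xi)=\frac{e^{i\beta}q(\xi)-i\sin\beta}{\cos\beta}\in\mathcal{P}.
\]
Since $H_{F,k}(\xi x_0)=\xi^kH_{F,k}(x_0)$ and $T_{x_0}(H_{F,k}(x_0)x_0)=H_{F,k}(x_0)=A_{k+1}$, Step 1 gives
\[
q(\xi)=\frac{\xi f_0'(\xi)}{f_0(\xi)}+O(|\xi|^4),\qquad f_0(\xi)=\xi+A_2\xi^2+A_3\xi^3+A_4\xi^4 .
\]
Comparing coefficients in $e^{i\beta}\xi f_0'/f_0=\cos\beta\, p+i\sin\beta$, we get with $\tau=e^{-i\beta}\cos\beta$:
\[
A_2=\tau p_1,\qquad 2A_3=\tau p_2+\tau^2p_1^2,\qquad 3A_4=\tau p_3+\tfrac32\tau^2p_1p_2+\tfrac12\tau^3p_1^3 .
\]
These are exactly the relations satisfied by a genuine spirallike function.

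\emph{Step 3: the estimate.} Substitute these relations into $A_2A_4-A_3^2$. Use the Libera--Zlotkiewicz representation of $p_2,p_3$ in terms of $p_1=c\in[0,2]$ (after a rotation) and parameters $|\zeta|\le1$, and maximize as in the proof of \cite[Theorem 3.1]{KR2015}. That argument depends only on $p_1,p_2,p_3$ of a function in $\mathcal{P}$, and the hypothesis $|\beta|\le\pi/3$ enters only there; it yields $|A_2A_4-A_3^2|\le\cos^2\beta$, with the maximum attained at $c=0$. Sharpness follows from the Example after Theorem~\ref{T1}: $F(x)=\frac{f(T_{x_0}(x))}{T_{x_0}(x)}x$ has homogeneous terms of the form $(\text{scalar polynomial})\cdot x$, so it satisfies \eqref{cc1}, and it gives $|H_{2,2}(F)|=\cos^2\beta$.

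The main obstacle is Step 1: justifying that the non-radial remainder of $F$ does not contaminate the coefficients of $q$ up to $\xi^3$. The first thing to try is to write $DF(x)=I+E(x)$ with $\|E(x)\|=O(\|x\|)$, expand $(I+E)^{-1}$ as a Neumann series, and check order by order that every term up to degree four is a scalar multiple of $x$. The second, more computational difficulty is redoing the Krishna--Reddy maximization carefully, rather than citing Theorem~A, whose hypothesis ($f$ spirallike) is not available here.
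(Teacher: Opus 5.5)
Your proposal is correct and follows essentially the paper's route. You use the same auxiliary function $q$ (the paper's $\phi$), the same $p\in\mathcal{P}$ with $e^{i\beta}q=\cos\beta\,p+i\sin\beta$, and the same relations $A_2=\tau p_1$, $2A_3=\tau p_2+\tau^2p_1^2$, $3A_4=\tau p_3+\tfrac32\tau^2p_1p_2+\tfrac12\tau^3p_1^3$. You also make the same appeal to the Krishna--Reddy maximization and use the same extremal mapping. Your identity $(DF(x))^{-1}F(x)=\psi(x)x+O(\|x\|^5)$ is just a compact form of the paper's order-by-order differentiation of $F(x)=DF(x)\varphi(x)$ under \eqref{cc1}.

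One correction to your closing remark: the hypothesis of Theorem~A is in fact available. Set $f(\xi)=\xi\exp\bigl(\int_0^{\xi}\tau\,(p(t)-1)\,t^{-1}\,dt\bigr)$. Then $\xi f'/f=1+\tau(p-1)$, so $f$ is $\beta$-spirallike with $a_k=A_k$ for $k=2,3,4$. You can therefore cite Theorem~A directly instead of redoing the maximization.
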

\begin{proof} Fix $x_0\in \partial B$ and let $T_{x_0}\in T(x_0)$. Define
\bea\label{aaa1}
\phi(\xi)=
\begin{cases}
\dfrac{\xi}{T_{x_0}\!\left(\varphi(\xi x_0)\right)}, & \xi\neq 0,\\[2mm]
1, & \xi=0,
\end{cases}
\eea
where
\[
\varphi(x)=(DF(x))^{-1}F(x).
\]
Then $\phi\in H(\mathbb U)$ and $\phi(0)=1$. Moreover, since
$F\in \widehat{S}_{\beta}(\mathbb{B})$ , it follows from the definition of spirallikeness that
\[
\Re\!\left(e^{i\beta}\frac{\xi}{T_{x_0}\!\left(\varphi(\xi x_0)\right)}\right)
=
\Re\!\left(e^{i\beta}\frac{\|\xi x_0\|}{T_{\xi x_0}\!\left(\varphi(\xi x_0)\right)}\right)>0,
\qquad \xi\in\mathbb U\setminus\{0\}.
\]
Hence,
\[\Re\!\left(e^{i\beta}\phi(\xi)\right)>0,\qquad \xi\in\mathbb U,
\]
and therefore $e^{i\beta}\phi$ belongs to the Carath\'odory class ($\mathcal{P}$). Then there exists $p\in \mathcal{P}$ such that 
\bea\label{kp1}e^{i\beta}\phi(z)=\cos \beta p(z)+i\sin \beta.\eea
Since $\phi$ is holomorphic function on $\mathbb{U}$, then it has Taylor series expansion:
\bea\label{phi1}\phi(\xi)=1+d_1\xi+d_2\xi^2+d_3\xi^3+d_4\xi^4+\cdots,\xi\in \mathbb{U}.\eea
Using (\ref{aaa1}), \eqref{eq:P}, \eqref{kp1} and \eqref{phi1}, a direct computation yields
\bea\label{qq1}
\begin{cases} \frac{T_{x_0}(D^2\varphi(0)(x_0^2))}{2!}&=-\cos \beta e^{-i\beta}p_1,\\
 \frac{T_{x_0}(D^3\varphi(0)(x_0^3))}{3!}&=\cos\beta\,e^{-i\beta}\left(\cos\beta\,e^{-i\beta}p_1^2-p_2\right)\\
\frac{T_{x_0}(D^4\varphi(0)(x_0^4))}{4!}&=\cos\beta\,e^{-i\beta}\left(-\cos^2\beta\,e^{-2i\beta}p_1^3+2\cos\beta\,e^{-i\beta}p_1p_2-p_3\right).
\end{cases}
\eea
On the other hand, since $\varphi(x)=DF(x))^{-1}F(x)$, it follows that
\bea\label{qqq2}
\begin{cases}
\frac{D^{2}F(0)(x^{2})}{2!}&=-\frac{D^{2}\varphi(0)(x^{2})}{2!}\\
-\frac{D^{3}F(0)(x^{3})}{3}&=\frac{D^{3}\varphi(0)(x^{3})}{3!}+D^{2}F(0)\!\left(x,\,\frac{D^{2}\varphi(0)(x^{2})}{2!}\right)\\
-\frac{D^{4}F(0)(x^{4})}{8}&=\frac{D^{4}\varphi(0)(x^{4})}{4!}+D^{2}F(0)\!\left(x,\,\frac{D^{3}\varphi(0)(x^{3})}{3!}\right)\\
&\;\;\;+\frac{1}{2}D^{3}F(0)\!\left(x^{2},\,\frac{D^{2}\varphi(0)(x^{2})}{2!}\right).
\end{cases}\eea

Combining(\ref{qq1}) and (\ref{qqq2}) together with assumption \eqref{cc1}, we derive explicit formulas for the coefficients \(A_2\), \(A_3\)  and \(A_4\) in terms of the parameters \(p_1\), \(p_2\) and  \(p_3\), respectively.

\bea\label{A2}A_2&=T_{x_0}\!\left(\frac{D^2F(0)(x_0^2)}{2!}\right)=H_{F,1}(x_0)
=-\,T_{x_0}\!\left(\frac{D^2\varphi(0)(x_0^2)}{2!}\right)
=\cos \beta e^{-i\beta}p_1
\eea
and
\[
\frac{D^2\varphi(0)(x_0^2)}{2!}=-\,\frac{D^2F(0)(x_0^2)}{2!}=-H_{F,1}(x_0)x_0
=-A_2x_0
=-\cos \beta e^{-i\beta}p_1x_0.
\]

\bea\label{A3} A_3&=&T_{x_0}\!\left(\frac{D^3F(0)(x_0^3)}{3!}\right)
=H_{F,2}(x_0) \nonumber\\
&=&-\frac{1}{2}\left(T_{x_0}\!\left(\frac{D^3\varphi(0)(x_0^3)}{3!}\right)
+T_{x_0}\!\left(D^2F(0)\!\left(x_0,\frac{D^2\varphi(0)(x_0^2)}{2!}\right)\right)\right) \nonumber\\
&=&-\frac{1}{2}\left(\cos\beta\,e^{-i\beta}\left(\cos\beta\,e^{-i\beta}p_1^2-p_2\right)+T_{x_0}\!\left(D^2F(0)(x_0,-\cos \beta e^{-i\beta}p_1x_0)\right)\right) \nonumber\\
&=&-\frac{1}{2}\left(\cos\beta\,e^{-i\beta}\left(\cos\beta\,e^{-i\beta}p_1^2-p_2\right)-2\cos \beta e^{-i\beta}p_1A_2\right)\nonumber\\
&=&\frac{1}{2}\left(\cos\beta\;e^{-i\beta}\left(\cos\beta\;e^{-i\beta}p_1^2+p_2\right)\right).
\eea

and

\beas 
\frac{D^3\varphi(0)(x_0^3)}{3!}&=&-\frac{D^3F(0)(x_0^3)}{3}-D^2F(0)\!\left(x_0,\frac{D^2\varphi(0)(x_0^2)}{2!}\right)\\
&=&-\frac{D^3F(0)(x_0^3)}{3}-D^2F(0)(x_0,-\cos \beta e^{-i\beta}p_1x_0)\\
&=&\left(-2H_{F,2}(x_0)+2\cos^2\beta e^{-2i\beta}p_1^2\right)x_0\\
&=&\left(-\cos\beta\;e^{-i\beta}\left(\cos\beta\;e^{-i\beta}p_1^2+p_2\right)+2\cos^2\beta e^{-2i\beta}p_1^2\right)x_0\\
&=&\cos\beta\;e^{-i\beta}\left(\cos\beta\;e^{-i\beta}p_1^2-p_2\right)x_0.\eeas
\bea\label{A4}
A_4&=& T_{x_0}\!\left(\frac{D^4F(0)(x_0^4)}{4!}\right)=H_{F,3}(x_0) \nonumber\\
&=&-\frac{1}{3}\Bigg(T_{x_0}\!\left(\frac{D^4\varphi(0)(x_0^4)}{4!}\right)+T_{x_0}\!\left(D^2F(0)\!\left(x_0,\frac{D^3\varphi(0)(x_0^3)}{3!}\right)\right) \nonumber\\
&& +\frac{1}{2}T_{x_0}\!\left(D^3F(0)\!\left(x_0^2,\frac{D^2\varphi(0)(x_0^2)}{2!}\right)\right)\Bigg) \nonumber\\
&=&-\frac{1}{3}\Bigg(T_{x_0}\!\left(\frac{D^4\varphi(0)(x_0^4)}{4!}\right)+\cos\beta\,e^{-i\beta}\left(\cos\beta\,e^{-i\beta}p_1^2-p_2\right)
T_{x_0}\!\left(D^2F(0)(x_0,x_0)\right) \nonumber\\
&&-\frac{1}{2}\cos\beta\,e^{-i\beta}p_1T_{x_0}\!\left(D^3F(0)(x_0^2,x_0)\right)\Bigg) \nonumber\\
&=&-\frac{1}{3}\Bigg(\cos\beta\,e^{-i\beta}\Big(-\cos^2\beta\,e^{-2i\beta}p_1^3+2\cos\beta\,e^{-i\beta}p_1p_2-p_3\Big) \nonumber\\
&&+2A_2\cos\beta\,e^{-i\beta}\left(\cos\beta\,e^{-i\beta}p_1^2-p_2\right)-3A_3\cos\beta\,e^{-i\beta}p_1\Bigg) \nonumber\\
&=&\frac{\cos\beta\;e^{-i\beta}}{6}\left(\cos^2\beta\;e^{-2i\beta}p_1^3+3\cos\beta\;e^{-i\beta}p_1p_2+2p_3\right).
\eea
Combining \eqref{A2}--\eqref{A4} and employing the inequality
$|xa+yb|\le |x|\,|a|+|y|\,|b|$, together with the identity
$|e^{in\beta}|=1$ for every $n\in\mathbb{R}$, we obtain
\beas
|H_{2,2}(f)|
&=&
\left|A_2A_4-A_3^2\right|\\
&\le&
\frac{\cos^2\beta}{12}
\left|
4p_1p_3-3p_2^2-\cos^2\beta\,p_1^4
\right|.
\eeas
The remaining part of the proof proceeds exactly as in the proof of Theorem A (see \cite[Theorem 3.1]{KR2015}); hence, the details are omitted. The sharpness of the result is an immediate consequence of Theorem \ref{T1}. This completes the proof.

\end{proof}

\section*{{\bf Declarations}}
\subsection*{Data Availability Statement}
Data sharing is not applicable to this article as no datasets were generated or analyzed during the current study.
\subsection*{Conflict of Interest}
The authors declare that they have no conflict of interest. 
\subsection*{Author Contributions}
Both authors contributed equally to this work.

\end{document}